\numberwithin{equation}{section}
\newtheorem{theorem}{Theorem}
\newtheorem{proposition}[theorem]{Proposition}
\theoremstyle{remark}
\newtheorem{example}[theorem]{Example}
\newtheorem{remark}[theorem]{Remark}
\newcounter{FNC}[page]
\def\fauxfootnote#1{{\addtocounter{FNC}{2}$^\fnsymbol{FNC}$%
     \let\thefootnote\relax\footnotetext{$^\fnsymbol{FNC}$\Magenta{#1}}}}
\newcommand{\defcolor}[1]{\RoyalBlue{#1}}
\newcommand{\demph}[1]{\defcolor{{\sl #1}}}
\newcommand{\frakS}{{\mathfrak S}}
\newcommand{\calF}{{\mathcal F}}
\newcommand{\Fln}{{\mathbb F}\ell(n)}
\newcommand{\Flan}{{\mathbb F}(\adot;n)}
\newcommand{\Gr}{\mbox{\rm Gr}}
\newcommand{\ch}{\mbox{\rm ch}}
\newcommand{\adot}{{a_\bullet}}
\newcommand{\Fdot}{{F_\bullet}}
\newcommand{\Edot}{{E_\bullet}}
\newcommand{\C}{{\mathbb C}}
\newcommand{\Q}{{\mathbb Q}}
\DeclareMathOperator{\up}{up}
\DeclareMathOperator{\rk}{rk}
\DeclareMathOperator{\het}{ht}
\DeclareMathOperator{\ep}{end}
\title{Two Murnaghan-Nakayama Rules in Schubert Calculus}
\author{Andrew Morrison}
\address{Andrew Morrison \\
         Department of Mathematics\\
         ETH,
         Z\"urich, Switzerland}
\email{andrewmo@math.ethz.ch}
\urladdr{http://www.math.ethz.ch/~andrewmo/}
\author{Frank Sottile}
\address{Frank Sottile \\
         Department of Mathematics\\
         Texas A\&M University\\
         College Station\\
         Texas \ 77843\\
         USA}
\email{sottile@math.tamu.edu}
\urladdr{http://www.math.tamu.edu/~sottile}
\thanks{Research of Morrison supported by the Swiss National Science Foundation through
  grant SNF-200021-143274 and the MSRI} 
\thanks{Research of Sottile supported by the NSF through grants DMS-1001615 and DMS-1501370 and the MSRI}
\keywords{Murnaghan-Nakayama rule, Schubert calculus, Schubert polynomials, quantum cohomology}
\subjclass[2010]{05E05, 14N15}
\begin{document}

\begin{abstract}
 The Murnaghan-Nakayama rule expresses the product of a Schur function with a Newton power sum in the basis of
 Schur functions. 
 We establish a version of the Murnaghan-Nakayama rule for Schubert polynomials and a version for the 
 quantum cohomology ring of the Grassmannian.
 These rules compute all intersections of Schubert cycles with tautological classes coming from the
 Chern character. 
 Like the classical rule, both rules are multiplicity-free signed sums.
\end{abstract}

\maketitle

%
\section{Introduction}
Each integer partition $\lambda$ has an associated Schur symmetric function $s_\lambda$,
and these Schur functions form a basis for the $\Q$-algebra of symmetric functions, which
is also freely generated by the power sum symmetric functions $p_r$.
The Murnaghan-Nakayama rule is the expansion in the Schur
basis of the product by a power sum,
 \begin{equation*}
   p_r\cdot s_\lambda\ =\ 
   \sum_{\mu} (-1)^{\het(\mu/\lambda)+1} s_\mu\,,
 \end{equation*}
the sum over all partitions $\mu$ such that $\mu/\lambda$ is a rim hook of size $r$ and $\het(\mu/\lambda)$ is
the height (number of rows) of $\mu/\lambda$.

Products $p_\lambda:=p_{\lambda_1}\dotsb p_{\lambda_k}$ of power sums form another basis for
symmetric functions, and the change of basis matrix between these two is the character table for the 
symmetric group.
In this way, the Murnaghan-Nakayama rule gives a formula for the characters of the symmetric
group~\cite{Mu37,Na41a,Na41b}. 

The cohomology ring of the Grassmannian $\Gr(k,n)$ of $k$-planes in $n$-space has a basis
of Schubert cycles $\sigma_\lambda$ for $\lambda$ a partition with $\lambda_1\leq n{-}k$
and $\lambda_{k+1}=0$ (written $\lambda\leq\Box_{k,n}$).
Its multiplication is induced from the ring of symmetric functions under the map that sends a Schur function
$s_\lambda$ to $\sigma_\lambda$ when $\lambda\leq\Box_{k,n}$ and otherwise sends it to $0$.
Images of the power sum functions give the Chern characters of the tautological bundle.
Thus the Murnaghan-Nakayama rule computes intersections of Schubert cycles with tautological
classes. 

We extend this classical formula in two directions, to Schubert polynomials~\cite{Mac91}
and to the quantum cohomology of Grassmannians~\cite{Be97}.
For each permutation $w\in S_n$, let $\frakS_w$ be the Schubert polynomial of Lascoux and
Sch\"utzenberger~\cite{LS82}. 

\begin{theorem}\label{Th:one}
 Let $k,r<n$ be positive integers and $w\in S_n$ a permutation.
 Then
\[
    p_r(x_1,\dotsc,x_k)\cdot\frakS_w\ =\ 
   \sum (-1)^{\het(\eta)+1}\frakS_{w\eta}\,,
\]
 the sum over all $(r{+}1)$-cycles $\eta$ such that 
\[
    w\ <_k\ w\eta 
    \qquad\mbox{with}\qquad
    \ell(w\eta)=\ell(w)+r\,,
\]
 where $<_k$ is the $k$-Bruhat order and $\het(\eta)= \#\{i\leq k\mid \eta(i)\neq i\}$.
\end{theorem}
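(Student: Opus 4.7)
\medskip
\noindent\textbf{Plan.} My strategy is to deduce Theorem~\ref{Th:one} from known $k$-Pieri formulas for Schubert polynomials, via Newton's identity in the ring of symmetric functions:
\[
  p_r \;=\; \sum_{j=0}^{r-1}(-1)^j\, s_{(r-j,1^j)}.
\]
Specializing the alphabet to $x_1,\dots,x_k$ and multiplying by $\frakS_w$, the theorem reduces to expanding each product $s_{(r-j,1^j)}(x_1,\dots,x_k)\cdot\frakS_w$ in the Schubert basis and summing with signs over $j$. A sanity check is that the case $r=1$ is exactly Monk's formula, and indeed in this case the sum over $2$-cycles $\eta=t_{ab}$ with $a\leq k<b$ recovers Monk with the correct sign $(-1)^{\het(\eta)+1}=+1$.

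\medskip
The extreme cases $j=0$ and $j=r-1$ correspond to multiplication by $h_r(x_1,\dots,x_k)$ and $e_r(x_1,\dots,x_k)$, for which Sottile's $k$-Pieri rules expand the product multiplicity-freely as $\sum\frakS_{w\zeta}$ over permutations $\zeta$ indexing distinguished saturated chains $w\lessdot_k\cdots\lessdot_k w\zeta$ of length $r$ in the $k$-Bruhat order, subject to different monotonicity conditions on the reflections used. The intermediate hook Schur polynomials are handled by the identity
\[
  s_{(r-j,1^j)} \;=\; \sum_{i=0}^{j}(-1)^i\, h_{r-j+i}\,e_{j-i},
\]
which is obtained by iterating the Pieri relation $h_a e_b = s_{(a,1^b)} + s_{(a+1,1^{b-1})}$. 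Combining this with the two $k$-Pieri rules expresses $s_{(r-j,1^j)}(x_1,\dots,x_k)\cdot\frakS_w$ as a signed sum over concatenations of a ``row'' $k$-chain with a ``column'' $k$-chain whose combined length is $r$. Substituting back into Newton's identity, everything becomes a signed sum over marked $k$-chains of length $r$ from $w$ to $w\zeta$, indexed by factorizations into a row piece and a column piece.

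\medskip
The main obstacle I anticipate is the cancellation that isolates the contribution of $(r{+}1)$-cycles. Using the Bergeron--Sottile structure theory of the $k$-Bruhat order, whenever $w<_k w\zeta$ with $\ell(w\zeta)=\ell(w)+r$, the permutation $\zeta$ factors canonically into disjoint cyclic blocks. I would construct a sign-reversing involution on marked chains that acts by swapping the row/column marking across a cover relation lying between two different cyclic blocks of $\zeta$; this should pair contributions with opposite signs, killing every target $w\zeta$ for which $\zeta$ consists of two or more disjoint cycles. When $\zeta=\eta$ is a single $(r{+}1)$-cycle, what remains is a combinatorial identity to be verified: the number of row/column markings of a given $k$-chain of length $r$ from $w$ to $w\eta$, summed with the signs dictated by Newton's identity and Jacobi--Trudi, must collapse to the single coefficient $(-1)^{\het(\eta)+1}$. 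I expect this to reduce to a binomial identity reflecting the fact that $\het(\eta)$ counts exactly those positions $\leq k$ that the cycle $\eta$ disturbs, and so records the unique alternating contribution that survives. Constructing the involution and executing this count are where the real work lies.
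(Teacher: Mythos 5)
Your skeleton (expand $p_r$ as the alternating sum of hooks, then reduce to Pieri-type rules in the $k$-Bruhat order) is the same as the paper's, but the proposal stops exactly at the two points where the content of the theorem lies, and at both points it heads toward harder combinatorics than is needed. First, to show that only single $(r{+}1)$-cycles survive, you propose a sign-reversing involution on marked chains pairing contributions across the disjoint cyclic blocks of $\zeta$; you do not construct it, and the paper shows it is unnecessary: writing $p_r(x_1,\dotsc,x_k)=\sum_{i\leq k}x_i^r$ and iterating the transition formula $x_i\cdot\frakS_w=\sum\frakS_{w(i,b)}-\sum\frakS_{w(a,i)}$ forces every $u$ with $\frakS_u$ appearing to have the form $w\eta$ with $\eta$ an $(r{+}1)$-cycle (each step multiplies by a transposition moving $i$), so no cancellation over multi-cycle targets ever has to be exhibited.

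Second, your further decomposition $s_{(r-j,1^j)}=\sum_i(-1)^i h_{r-j+i}e_{j-i}$ reintroduces signs and multiplicities inside each hook product, leaving you with an unexecuted ``combinatorial identity to be verified.'' That identity is not a routine binomial count; it is essentially the theorem. The paper avoids it by invoking two specific results you do not use: Sottile's hook Pieri rule (Theorem~8 of~\cite{So96}, Proposition~\ref{Prop:peakless} here), which expands $s_{(b,1^{a-1})}(x_1,\dotsc,x_k)\cdot\frakS_w$ as a \emph{multiplicity-free} sum over peakless chains, and Lemma~6.7 of~\cite{BS02} (Proposition~\ref{P:minimal} here), which says that for a minimal cycle $\zeta$ the interval $[w,\zeta w]_k$ contains a \emph{unique} peakless chain, whose decreasing prefix has length exactly $\het(\eta)$. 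Together these show that each $\frakS_{w\eta}$ occurs in exactly one hook product $s_{(r-a+1,1^{a-1})}\cdot\frakS_w$ with coefficient $1$, so the sign $(-1)^{a-1}=(-1)^{\het(\eta)+1}$ is read off directly from the alternating hook sum. Without these inputs (or a genuine substitute for them), your outline has a real gap at both the cancellation step and the coefficient computation.
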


For simplicity, our notation intentionally suppresses the dependence of $\het(\eta)$ on $k$.

The cohomology of the manifold $\Fln$ of complete flags in $\C^n$ has a basis of Schubert cycles $[X_w]$
indexed by permutations $w$ of $1,\dotsc,n$.
Mapping the variables $x_1,\dotsc,x_n$ to the Chern roots of the dual of the tautological
flag bundle is a surjection onto the cohomology ring with the Schubert polynomial
$\frakS_w$ sent to the Schubert cycle $[X_w]$ and power sums sent to tautological classes.
Thus this Murnaghan-Nakayama rule also computes intersections of Schubert cycles with tautological classes.

Additively, the quantum cohomology $qH^*(\Gr(k,n))$ of the Grassmannian $\Gr(k,n)$ is the vector space of
polynomials in a parameter $q$ with coefficients from $H^*(\Gr(k,n))$.
It has a new multiplication $*$ among Schubert cycles that encodes the three-point Gromov-Witten invariants
and reduces to the usual one when $q=0$~\cite{Be97}. 

\begin{theorem}\label{Th:two}
 Let $k,r< n$ be positive integers and $\lambda\leq \Box_{k,n}$.
 Then
\[
    p_r * \sigma_\lambda\ =\ 
      \sum_\mu  (-1)^{\het(\mu/\lambda)+1} \sigma_\mu 
     \ -\ (-1)^k q \sum_\nu  (-1)^{\het(\lambda/\nu)+1} \sigma_\nu\,,
\]
 where the first sum is over all $\mu\leq\Box_{k,n}$ with $\mu/\lambda$ a rim hook of size $r$ and the second
 sum is over all $\nu\leq\lambda$ with $\lambda/\nu$ a rim hook of size $n{-}r$.
\end{theorem}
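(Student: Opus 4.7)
The plan is to deduce Theorem~\ref{Th:two} from the classical Murnaghan-Nakayama identity in $\Lambda$ by transporting it to $qH^*(\Gr(k,n))$ through the rim-hook rule of Bertram, Ciocan-Fontanine, and Fulton. That rule presents $qH^*(\Gr(k,n))$ as a quotient of $\Lambda\otimes\Z[q]$ in which a Schur function $s_\mu$ is sent to $\sigma_\mu$ when $\mu\leq\Box_{k,n}$, to $(-1)^{k-h}q\,\sigma_{\tilde\mu}$ when $\ell(\mu)\leq k$ with $\mu_1>n-k$ and $\mu$ admits an $n$-rim hook of height $h$ whose removal yields $\tilde\mu\leq\Box_{k,n}$, and to $0$ otherwise (in particular when $\ell(\mu)>k$). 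Since $|\mu|=|\lambda|+r<k(n{-}k)+n$, no term ever requires more than one $n$-rim hook removal, so only $q^0$ and $q^1$ appear.

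Apply this rule to the classical identity
\[
p_r\cdot s_\lambda \;=\; \sum_\mu (-1)^{\het(\mu/\lambda)+1}\,s_\mu\qquad(\mu/\lambda \text{ an } r\text{-rim hook}).
\]
The $\mu\leq\Box_{k,n}$ give $\sigma_\mu$, reproducing the first sum of Theorem~\ref{Th:two}. Those $\mu$ with $\ell(\mu)>k$ or with $\mu_1>n-k$ but irreducible to $\Box_{k,n}$ vanish. The remaining $\mu$ — those with $\ell(\mu)\leq k$, $\mu_1>n-k$, and reducible to some $\nu:=\tilde\mu\leq\Box_{k,n}$ via one $n$-rim hook — contribute the $q$-terms, and the task is to identify these with the second sum.

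The identification comes from a bijection between such $\mu$ and partitions $\nu\leq\lambda$ with $\lambda/\nu$ an $(n{-}r)$-rim hook. Indeed, the $n$-rim hook $\mu/\nu$ decomposes along its unique junction row as $(\mu/\lambda)\cup(\lambda/\nu)$, giving $\nu\leq\lambda$ with $\lambda/\nu$ an $(n{-}r)$-rim hook and $\het(\mu/\nu)=\het(\mu/\lambda)+\het(\lambda/\nu)-1$. Conversely, the $n$-abacus model makes the inverse transparent: an $(n{-}r)$-rim hook $\lambda/\nu$ corresponds to sliding one bead by $n{-}r$ positions, and sliding the same bead $r$ further positions produces the unique lift $\mu$. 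Combining the Murnaghan-Nakayama sign with the rim-hook-rule sign gives
\[
(-1)^{\het(\mu/\lambda)+1}\cdot(-1)^{k-\het(\mu/\nu)}\;=\;(-1)^{k+2-\het(\lambda/\nu)}\;=\;-(-1)^{k}(-1)^{\het(\lambda/\nu)+1},
\]
exactly the coefficient in the second sum of Theorem~\ref{Th:two}.

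The main obstacle is the sign bookkeeping: extracting the rim-hook-rule sign $(-1)^{k-h}$ from the chosen presentation of $qH^*(\Gr(k,n))$ (in which $h_n$ reduces to $(-1)^{k-1}q$), and verifying that the bead extension $\nu\mapsto\mu$ always lands inside $\{\mu:\ell(\mu)\leq k\}$. The latter uses both $\lambda\leq\Box_{k,n}$ (so that bead positions in $\nu$ below $-k$ are all occupied, blocking illegal moves) and $r<n$.
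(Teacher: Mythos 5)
Your proposal is correct and follows essentially the same route as the paper: push the classical Murnaghan--Nakayama identity in $\Lambda_k$ through the Bertram--Ciocan-Fontanine--Fulton rim-hook reduction $\psi$, keep the $\mu\leq\Box_{k,n}$ terms as the classical sum, and biject the overflow terms with $(n{-}r)$-rim-hook removals $\lambda/\nu$ via the decomposition $\mu/\nu=(\mu/\lambda)\cup(\lambda/\nu)$ and the height identity $\het(\mu/\nu)=\het(\mu/\lambda)+\het(\lambda/\nu)-1$, with identical sign bookkeeping. The only difference is that you verify the combinatorial lemma (the unique $n$-rim hook of $\mu$ contains $\mu/\lambda$, and the correspondence $\mu\leftrightarrow\nu$ is bijective) with the abacus model rather than the paper's direct argument about where removable rim hooks can start; note only that your preliminary size bound $|\mu|<k(n{-}k)+n$ does not by itself rule out two $n$-hook removals --- that fact really follows from your later observation that one removal already lands in $\nu\leq\lambda\leq\Box_{k,n}$, which is an $n$-core.
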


The Murnaghan-Nakayama rule has many generalizations.
Fomin and Green gave a version for non-commutative symmetric functions, which led to formulas
for characters of representations associated to stable Schubert and Grothendieck polynomials~\cite{FoGr98}. 
McNamara gave a skew version~\cite{McN}, which Konvalinka generalized to a skew
rule for multiplication by a `quantum' (perturbed by a parameter $q$) power sum function~\cite{Ko12}. 
Bandlow, et al.\ gave a version in the cohomology of an affine Grassmannian~\cite{BSZ11}. 
Tewari gave a version for noncommutative Schur functions~\cite{Tewari}.
Wildon gave a plethystic version~\cite{Wildon}.
Ross gave a version for loop Schur functions~\cite{Ro14}, providing a fundamental step in
the orbifold Gromov--Witten/Donaldson--Thomas correspondence~\cite{RoZo13}.  

This paper is organized as follows.
In Section~\ref{S:classical} we recall some aspects of the Schur basis of symmetric functions and 
the classical Murnaghan-Nakayama rule.
In Section~\ref{S:flag}, we recall some results concerning the Schubert polynomials, prove Theorem~\ref{Th:one},
and explain its significance for the flag manifold.
We conclude with Section~\ref{S:quantum}, where we recall the quantum cohomology ring of the Grassmannian and
prove Theorem~\ref{Th:two}.

Both authors thank the MSRI where this work began in Winter 2013.
This paper is an expanded version of an abstract written by Morrison for the 2014 FPSAC
conference~\cite{AM}. 

%
\section{The classical Murnaghan-Nakayama rule}\label{S:classical}

For a complete and elegant treatment of symmetric functions see Macdonald's book~\cite{Ma95}.
The $\Q$-algebra \demph{$\Lambda$} of homogeneous symmetric functions in countably many
indeterminants $x_1,x_2,\dotsc$ is freely generated by three distinguished sequences of
symmetric functions.
One consists of the elementary symmetric functions, $e_a$, where
$e_a$ is the formal sum of all square-free monomials of degree $a$.
A second consists of the complete symmetric functions, $h_b$, where $h_b$ is the formal sum of all monomials 
of degree $b$. 
The third distinguished sequence consists of the power sum functions,
$p_r$, where $p_r$ is the formal sum of the $r$th powers of the variables, 
$p_r=x_1^r+x_2^r+\dotsb$.

The most important family of symmetric functions are the Schur functions $s_\lambda$,
which are indexed by partitions 
$\lambda\colon \lambda_1\geq\dotsb\geq\lambda_k\geq 0$,
where $\lambda_i$ is an integer.
The Schur function $s_\lambda$ may be defined by the Jacobi-Trudi formula as a determinant
\[
   s_\lambda\ =\ \det\left( h_{\lambda_i+j-i}\right)_{i,j=1}^k\,.
\]
This has degree $\defcolor{|\lambda|}:=\lambda_1+\dotsb+\lambda_k$.
We have that $s_{(b)}=h_b$ and $s_{(1^a)}=e_a$, where $(1^a)$ is a sequence of $a$ 1s.
The set of all Schur functions forms a basis for $\Lambda$.

We often represent a partition $\lambda$ by its Young diagram, which is a left-justified
array of boxes with $\lambda_i$ boxes in row $i$.
For example
\[
   (3,1,0,0)\ \longleftrightarrow\ 
   \raisebox{-3.5pt}{\includegraphics{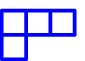}}
      \qquad\mbox{and}\qquad
   (5,4,3,1)\ \longleftrightarrow\ 
   \raisebox{-10.5pt}{\includegraphics{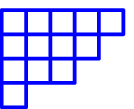}}\;.
\]
When $\lambda$ and $\mu$ are partitions with $\lambda_i\leq\mu_i$ for all $i$, we write 
\defcolor{$\lambda\leq\mu$} and write \defcolor{$\mu/\lambda$} for the set-theoretic
difference  $\mu\smallsetminus\lambda$ of their diagrams
The \demph{size} of $\lambda/\mu$ is its number of boxes.

A \demph{rim hook} is a skew shape $\mu/\lambda$ that meets a connected set of Northwest to Southeast
($\searrow$) diagonals with no two boxes in the same  diagonal.
The \demph{height $\het(\mu/\lambda)$} of a rim hook $\mu/\lambda$ is its
number of rows. 
A \demph{horizontal strip} $\lambda/\mu$  has no two boxes in the same column, and a \demph{vertical strip}
$\lambda/\mu$ has no two boxes in the same row.
Below are rim hooks of height two, three, and four of sizes five, six, and seven, respectively, as well as a
horizontal strip of size five and a vertical strip of size four.
\[
  \includegraphics{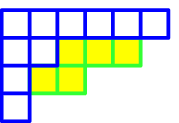}\qquad
  \includegraphics{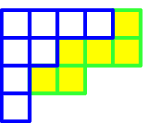}\qquad
  \includegraphics{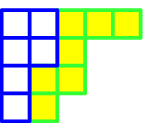}\qquad
  \includegraphics{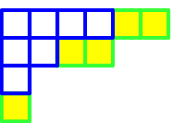}\qquad
  \includegraphics{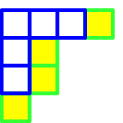}
\]

A partition $\lambda$ is a \demph{hook} if $\lambda_2\leq 1$.
For $a,b\geq 1$, the hook with $a$ rows and $b$ columns is $(b,1^{a-1})$.
Since $e_{a-1}\cdot h_{b}=s_{(b,1^{a-1})}+s_{(b{+}1,1^{a-2})}$, we have 
 \begin{equation}\label{Eq:HookFormula}
    s_{(b,1^{a-1})}\ =\ e_{a-1}\cdot h_b - e_{a-2}\cdot h_{b+1} + \dotsb + 
                     (-1)^{a-1} h_{a+b-1}\,.
 \end{equation}
The power sum symmetric function $p_r$ is the alternating sum of hooks of size $r$,
 \begin{equation}\label{Eq:hookSum}
 p_r\ =\  h_r-s_{(r-1,1)}+\dotsb+(-1)^{r-1}e_r\ =\ 
        \sum_{i=0}^{r-1}(-1)^i s_{(r-i,1^i)}\,.
 \end{equation}
We give formulas for the product of a Schur function by these generating sets.

\begin{proposition}\label{P:pieri_formulas}
 Let $\lambda$ be a partition and $a,b,r$ be positive integers.
 Then we have
\begin{enumerate}
 \item[$(i)$] ${\displaystyle e_a\cdot s_\lambda\ =\ \sum s_\mu}$, the sum over all
   $\mu$ with $\mu/\lambda$ a vertical strip of size $a$

 \item[$(ii)$]  ${\displaystyle h_b\cdot s_\lambda\ =\ \sum s_\mu}$, the sum over all
   $\mu$ with $\mu/\lambda$ a horizontal strip of size $b$.

 \item[$(iii)$]  ${\displaystyle p_r\cdot s_\lambda\ =\ 
               \sum (-1)^{\het(\mu/\lambda)+1}s_\mu}$, the sum over all
   $\mu$ with $\mu/\lambda$ a rim hook of size $r$. 
\end{enumerate}
\end{proposition}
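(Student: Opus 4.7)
My plan is to prove parts (i) and (ii) as the classical Pieri rules and then derive (iii) by combining them with the identities \eqref{Eq:HookFormula} and \eqref{Eq:hookSum}.

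For (i) and (ii), the standard proof uses the Jacobi-Trudi formula $s_\lambda = \det(h_{\lambda_i+j-i})_{1 \leq i,j \leq k}$. Multiplying by $h_b$ yields a new determinant in which elementary column operations identify the expansion as $\sum s_\mu$ summed over $\mu/\lambda$ a horizontal strip of size $b$, proving (ii). Part (i) then follows from (ii) via the involution $\omega$ on $\Lambda$, which exchanges $e_a \leftrightarrow h_a$, sends $s_\lambda$ to $s_{\lambda'}$, and interchanges horizontal and vertical strips under conjugation. I would refer to Macdonald~\cite{Ma95} for details.

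For (iii), I would substitute \eqref{Eq:HookFormula} into \eqref{Eq:hookSum} to obtain
\[
p_r \;=\; \sum_{i=0}^{r-1}\sum_{j=0}^{i} (-1)^{i+j}\, e_{i-j}\, h_{r-i+j},
\]
and then apply Pieri rules (ii) and (i) in succession to $p_r \cdot s_\lambda$. This expresses the coefficient of $s_\mu$ as a signed sum over triples $(\nu, i, j)$ where $\nu/\lambda$ is a horizontal strip of size $r-i+j$ and $\mu/\nu$ is a vertical strip of size $i-j$. One can verify the small case $\mu/\lambda$ of size $r=2$ by direct calculation, which is reassuring.

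The main obstacle is showing that this alternating sum equals $(-1)^{\mathrm{ht}(\mu/\lambda)+1}$ when $\mu/\lambda$ is a rim hook of size $r$ and vanishes otherwise. I would establish this via a sign-reversing involution on the set of valid triples: whenever $\mu/\lambda$ contains a $2 \times 2$ block of boxes, one identifies a canonical such block (say the lexicographically first) and toggles the membership of a specific corner between $\nu/\lambda$ and $\mu/\nu$, accompanied by a compensating adjustment of $(i,j)$, to pair contributions of opposite sign. The fixed points occur exactly when $\mu/\lambda$ has no $2 \times 2$ block, that is, when $\mu/\lambda$ is a rim hook, and a direct enumeration of the valid decompositions in that case produces the signed total $(-1)^{\mathrm{ht}(\mu/\lambda)+1}$.
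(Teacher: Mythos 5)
The paper offers no proof of Proposition~\ref{P:pieri_formulas}: these are the classical Pieri and Murnaghan--Nakayama rules, attributed to Pieri~\cite{Pi1893} and treated in Macdonald~\cite{Ma95}, so your argument must be judged on its own rather than against a proof in the text. Parts $(i)$ and $(ii)$ are fine as standard facts deferred to Macdonald. For part $(iii)$ your reduction is sound and is a genuinely different route from Macdonald's bialternant computation: collecting the pairs $(i,j)$ with $i-j=v$ in your double sum gives $p_r=\sum_{v=0}^{r-1}(-1)^v(r-v)\,e_v h_{r-v}$ (a form of Newton's identity), so by $(i)$ and $(ii)$ the coefficient of $s_\mu$ in $p_r\cdot s_\lambda$ is $\sum_\nu(-1)^{v}(r-v)$, the sum over partitions $\nu$ with $\nu/\lambda$ a horizontal strip and $\mu/\nu$ a vertical strip of size $v\le r-1$.

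The gap is in the cancellation step. First, your involution has nothing to act on: if $\mu/\lambda$ contains a $2\times2$ block in rows $i,i+1$ and columns $j,j+1$, then since each row of $\mu/\nu$ is a suffix of the corresponding row of $\mu$, the vertical-strip condition forces $(i,j)$ and $(i+1,j)$ both into $\nu/\lambda$, which then fails to be a horizontal strip; the set of valid triples is already empty. Second, and more seriously, ``no $2\times2$ block'' is \emph{not} equivalent to ``rim hook'': it characterizes disjoint unions of rim hooks, and connectivity is the condition you have dropped. The cancellation that actually needs proving is for disconnected such shapes --- e.g.\ two isolated boxes with $r=2$, where the contributions are $+2,-1,-1$ --- and there the pairing must involve toggling the extra top box of some component against the multiplicity index $j$, not a $2\times2$ block. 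So your fixed-point set is wrong and the involution as described does not close the argument. It can be repaired: for a disjoint union of $c$ rim hooks of heights $a_1,\dots,a_c$ the valid $\nu$ are exactly those with $v=\sum_t(a_t-1)+|\epsilon|$ for $\epsilon\in\{0,1\}^c$, and $\sum_\epsilon(-1)^{|\epsilon|}(r-v)$ vanishes for $c\ge2$ while for $c=1$ it equals $(-1)^{a_1-1}=(-1)^{\het(\mu/\lambda)+1}$; but this enumeration (or a correct involution with the right fixed points) is the substance of the proof and must be supplied.
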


Formulas $(i)$ and $(ii)$ of Proposition~\ref{P:pieri_formulas} are called Pieri rules,
for they are essentially due to Pieri~\cite{Pi1893}, and Formula $(iii)$ is the
Murnaghan-Nakayama rule.

These formulas make sense in two important quotients of $\Lambda$.
The ring \defcolor{$\Lambda_k$} of polynomials that are symmetric in
$x_1,\dotsc,x_k$ for an integer $k>0$ is obtained from $\Lambda$ by specializing $x_a=0$
(equivalently $e_a=0$) for $a>k$.
This also has the vector space presentation
\[
  \Lambda_k\ \simeq\ \Lambda/\Q\{s_\lambda\mid \lambda_{k+1}>0\}\,.
\]
Thus $\Lambda_k$ has a basis of Schur polynomials $s_\lambda(x_1,\dotsc,x_k)$ where $\lambda$
has at most $k$ parts in that $\lambda_{k+1}=0$.
It follows that the Murnaghan-Nakayama rule holds in $\Lambda_k$, if we restrict 
to partitions with at most $k$ parts.

Let $n> k$ be an integer.
The \demph{Grassmannian $\Gr(k,n)$} of $k$-planes in $\C^{n}$ is a complex manifold of
dimension $k(n{-}k)$.
Its cohomology ring with $\Q$-coefficients is a quotient of both $\Lambda_k$ and $\Lambda$,
\[
   H^*(\Gr(k,n))\ \simeq\ 
   \Lambda/\langle e_a,h_b\mid a>k, b>n{-}k\rangle\ =\ 
   \Lambda/\Q\{ s_\lambda\mid \lambda\not\leq\Box_{k,n}\}\,,
\]
where $\Box_{k,n}$ is the partition with $k$ parts, each of size $n{-}k$.
We also have that 
 \begin{equation}\label{Eq:Grass_Quot}
  H^*(\Gr(k,n))\ \simeq\ 
   \Lambda_k/\langle h_{n{-}k{+}1},\dotsc,h_{n-1},h_{n}\rangle\,.
 \end{equation}

By these presentations, the images \defcolor{$\sigma_\lambda$} of the Schur functions $s_\lambda$ for
$\lambda\leq\Box_{k,n}$ form a basis for $H^*(\Gr(k,n))$.
These $\sigma_\lambda$ are called \demph{Schubert cycles} as they are Poincar\'e dual to
the fundamental cycles in homology of Schubert varieties.
The Pieri and Murnaghan-Nakayama rules hold in
$H^*(\Gr(k,n))$, if we restrict to partitions $\lambda,\mu\leq\Box_{k,n}$, and
to $a\leq k$, $b\leq n{-}k$, and $r< n$.

The homomorphism $\psi\colon \Lambda_k\twoheadrightarrow H^*(\Gr(k,n))$ implicit in the
quotient~\eqref{Eq:Grass_Quot} may be understood geometrically as follows.
The tautological bundle $E\to\Gr(k,n)$ is the subbundle of 
$\C^{n}\times\Gr(k,n)$ whose fibre over a point $H\in\Gr(k,n)$ is the $k$-plane $H$.
The map $\psi$ sends the elementary symmetric polynomial $e_a$ to the $a$th
Chern class $c_a(E^\vee)$ of the dual of $E$, equivalently, it sends the variables $x_1,\dotsc,x_k$ to
the Chern roots $y_1,\dotsc,y_k$ of $E^\vee$.

The Chern character $\defcolor{\ch(E^\vee)}\in H^*(\Gr(k,n))$ is the cohomology class
\[
   \ch(E^\vee)\ =\ \sum_{i=1}^k \exp(y_i)\ =\ 
    \sum_{i=1}^k 1+y_i + \frac{y_i^2}{2} + \frac{y_i^3}{3!} + \dotsb
\]
In terms of the map $\psi$, it is 
\[
   \psi(k + p_1 +\tfrac{1}{2}p_2+\tfrac{1}{3!}p_3 + \dotsb)\ =\ 
   k + \sum_{i=1}^{n-1} \tfrac{1}{i!}p_i(y_1,\dotsc,y_k) \,.
\]
The classes $\defcolor{\ch_r(E^\vee)}=\frac{1}{r!}p_r(y_1,\dotsc,y_k)$ are the \demph{tautological classes}.
Thus the Murnaghan-Nakayama rule is a formula in cohomology for multiplication by tautological classes.

%
\section{Murnaghan-Nakayama rule for Schubert polynomials}\label{S:flag}

We prove Theorem~\ref{Th:one} using results about multiplication of Schubert polynomials.
An excellent reference for Schubert polynomials, with proofs, is Macdonald's book~\cite{Mac91}.

The symmetric group \demph{$S_n$} acts on polynomials in $x_1,\dotsc,x_n$ by permuting
the variables.
Write \defcolor{$(i,j)$} for the transposition interchanging $i$ and $j$ with $i<j$ and 
\defcolor{$t_i$} for the simple transposition $(i,i{+}1)$.
Permutations $w\in S_n$ are products of simple transpositions, $w=t_{a_1}\dotsb t_{a_m}$.  
The minimal length of such a factorization is when $m=\ell(w)$, the number of inversions in $w$.
In this case, $(a_1,\dotsc,a_m)$ is a \demph{reduced word} for $w$.

The divided difference operator for $i=1,\dotsc,n{-}1$ is
\[
    \defcolor{\partial_i}\ :=\ \frac{1-t_i}{x_i-x_{i+1}}\,,
\]
which acts on the polynomial ring $\Q[x_1,\dotsc,x_n]$.
These satisfy $\partial_i\circ\partial_i=0$ and the braid relations, so that if $(a_1,\dotsc,a_m)$ is a reduced word
for $w$, then $\partial_w:=\partial_{a_1}\circ\dotsb\circ\partial_{a_m}$ depends on $w$ and not on the choice of
reduced word.
Building on work of Bernstein, Gelfand, and Gelfand~\cite{BGG73} and of Demazure~\cite{De74}, Lascoux and
Sch\"utzenberger~\cite{LS82} defined the Schubert polynomial $\frakS_w$ to be
\[
    \defcolor{\frakS_w} \ :=\ \partial_{w^{-1}\omega_0}(x_1^{n-1}x_2^{n-2}\dotsb x_{n-1})\,,
\]
where \defcolor{$\omega_0$} is the longest element of the symmetric group $S_n$, $\omega_0(i)=n{+}1{-}i$.
Schubert polynomials are linearly independent.
For example, $\frakS_{t_k}=x_1+\dotsb+x_k$.

We may embed $S_n$ into $S_{n+m}$ as permutations fixing $n{+}1,\dotsc,n{+}m$.
Schubert polynomials are stable in the sense that 
for $w\in S_n$ we get the same the Schubert polynomial regarding $w\in S_n$ as we do regarding $w\in S_{m+n}$.
If \defcolor{$S_\infty$} is the set of permutations of $\{1,2,\dotsc,\}$ that fix all but finitely many integers, then
$\frakS_w$ is well-defined for $w\in S_\infty$, and these form a basis for the polynomial ring
$\Q[x_1,x_2,\dotsc]$.

All Schur symmetric polynomials are Schubert polynomials.
If $w$ is a permutation with a unique descent at $k$, so that $w(i)>w(i{+}1)$ implies that $i=k$, then 
$\frakS_w$ equals the Schur polynomial $s_\lambda(x_1,\dotsc,x_k)$, where
$\lambda = (w(k){-}k, \dotsc,w(2){-}2,w(1){-}1)$.

An open problem is to give a combinatorial formula for the expansion of a product of
Schubert polynomials in the basis of Schubert polynomials.
This is possible as the coefficients are nonnegative, and it would be an analog of the
Littlewood-Richardson rule. 
The first step was due to Monk~\cite{Monk},
 \begin{equation}\label{Eq:Monk}
   \frakS_{t_k}\cdot\frakS_w\ =\ (x_1+\dotsb+x_k)\cdot\frakS_w\ =\ 
    \sum \frakS_u\,,
 \end{equation}
the sum over all permutations $u=w(i,j)$ where $i\leq k<j$ and $\ell(u)=\ell(w)+1$.
The range of summation in~\eqref{Eq:Monk} gives the cover relation $\lessdot_k$ in the $k$-Bruhat order on the
symmetric group. 
Expanding $(x_1+\dotsb+x_k)^m$ in the basis of Schur polynomials and iterating the
formula~\eqref{Eq:Monk} shows that any Schubert polynomial $\frakS_u$ appearing in the
product of $\frakS_w$ with a symmetric polynomial in $x_1,\dotsc,x_k$ must have $w<_k u$.
When $w\lessdot_k w(i,j)$ with $i\leq k<j$, we write $w\xrightarrow{\,w(i)\,}w(i,j)$,
labeling the cover by the value of $w$ at $i$.

Monk's rule was generalized to a Pieri rule for multiplying by elementary or complete homogeneous symmetric
polynomials,  $e_a(x_1,\dotsc,x_k)$ and $h_b(x_1,\dotsc,x_k)$ in~\cite{So96}, and that paper also gave the formula for
hook Schur polynomials.

\begin{proposition}[\cite{So96}, Theorem 8] \label{Prop:peakless}
  Let $a \leq k$ and $b$ be positive integers and set $r = a+b-1$. 
  For a permutation $w\in S_\infty$, we have 
\[ 
    s_{(b,1^{a-1})} (x_1,\dotsc,x_k) \cdot\frakS_w\  =\
     \sum \frakS_{\ep(\gamma)} 
\]
the sum over all chains 
$\gamma \colon w \xrightarrow{\,\alpha_1\,} w^1\xrightarrow{\,\alpha_2\,}
      \dotsb \xrightarrow{\,\alpha_r\,} w^r=\ep(\gamma)$
in the $k$-Bruhat order satisfying 
 \begin{equation}\label{Eq:peakless}
   \alpha_1 > \dotsb > \alpha_a < \alpha_{a+1} < \dotsb < \alpha_{r}\,.
 \end{equation}
\end{proposition}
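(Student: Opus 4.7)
The plan is to derive Proposition \ref{Prop:peakless} from the two simpler Pieri rules for $e_c(x_1,\dots,x_k)\cdot\frakS_w$ and $h_d(x_1,\dots,x_k)\cdot\frakS_w$ established in the same paper \cite{So96}; I take these as given. They respectively express the products as sums of $\frakS_{\ep(\delta)}$ over chains $\delta$ in the $k$-Bruhat order whose labels are strictly decreasing (for $e_c$) and strictly increasing (for $h_d$).

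The starting point is the hook identity \eqref{Eq:HookFormula}, specialized to $\Lambda_k$, which rewrites the hook Schur polynomial as
\[
s_{(b,1^{a-1})}(x_1,\dots,x_k)\ =\ \sum_{i=0}^{a-1}(-1)^i\, e_{a-1-i}(x_1,\dots,x_k)\, h_{b+i}(x_1,\dots,x_k).
\]
Multiplying by $\frakS_w$ and applying the $e$-Pieri rule followed by the $h$-Pieri rule gives
\[
s_{(b,1^{a-1})}(x_1,\dots,x_k)\cdot\frakS_w\ =\ \sum_{i=0}^{a-1}(-1)^i \sum_{\gamma\in C_i}\frakS_{\ep(\gamma)},
\]
where $C_i$ is the set of chains $\gamma\colon w\xrightarrow{\alpha_1}\cdots\xrightarrow{\alpha_r}\ep(\gamma)$ in $<_k$ of length $r=a+b-1$ whose first $a-1-i$ labels are strictly decreasing and whose last $b+i$ labels are strictly increasing.

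The next step is to reorganize the double sum by chain. Every $\gamma\in\bigcup_i C_i$ is automatically peakless: there is a unique position $p(\gamma)\in\{1,\dots,r\}$ such that $\alpha_1>\cdots>\alpha_{p(\gamma)}<\cdots<\alpha_r$. A direct check shows that $\gamma$ belongs to $C_i$ precisely when the split index $s:=a-1-i$ equals $p(\gamma)-1$ or $p(\gamma)$. These two candidate values of $s$ correspond to consecutive values of $i$ with opposite signs $(-1)^{a-p(\gamma)-1}$ and $(-1)^{a-p(\gamma)}$, so their contributions cancel whenever both lie in $[0,a-1]$, which happens exactly when $1\le p(\gamma)\le a-1$. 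When $p(\gamma)=a$, only $s=a-1$ (i.e.\ $i=0$) is in range, so $\gamma$ survives with coefficient $+1$; when $p(\gamma)\ge a+1$, neither candidate for $s$ lies in $[0,a-1]$, so $\gamma$ does not appear at all.

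The chains surviving the cancellation are therefore exactly those with $p(\gamma)=a$, which are precisely the chains obeying the pattern \eqref{Eq:peakless}. The main obstacle is the careful bookkeeping in the reorganization step: one must verify that for every peakless chain, each of the two candidate split indices really does produce a valid decomposition in the corresponding $e_{a-1-i}h_{b+i}$-expansion (so that no chain is missed and no extra chain is introduced), and that the $e$- and $h$-Pieri rules produce each chain in $C_i$ with multiplicity exactly one, so that the term-by-term cancellation is legitimate.
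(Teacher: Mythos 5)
The paper does not prove this proposition---it is imported verbatim from \cite{So96}, Theorem~8---so there is no in-paper argument to compare against. Your derivation is correct as a reduction to the $e$- and $h$-Pieri rules of \cite{So96}: the bookkeeping checks out (consecutive labels in a $k$-Bruhat chain are always distinct, so every chain in some $C_i$ has a well-defined valley position $p(\gamma)$, it lies in $C_i$ exactly for $s=a-1-i\in\{p(\gamma)-1,p(\gamma)\}$, and the case analysis on which of $i=a-p(\gamma)$, $i=a-p(\gamma)-1$ lands in $[0,a-1]$ is exhaustive), and this telescoping of the hook identity \eqref{Eq:HookFormula} against the two Pieri rules is essentially the original route to the hook formula. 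The only inputs you must still take on faith are exactly the ones you flagged, namely the multiplicity-one chain formulations of the $e$- and $h$-Pieri rules themselves, which are the main theorems of \cite{So96} and are likewise not proved here.
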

A chain $\gamma$ whose labels satisfy~\eqref{Eq:peakless} is \demph{peakless}.

Chains in the $k$-Bruhat order were studied in~\cite{BS98,BS99}.
For a permutation $\zeta$, let $\defcolor{\up(\zeta)}:=\{a\mid a<\zeta(a)\}$.
If $k$ is any integer with $k\geq|\up(\zeta)|$, then there is a permutation $w$ with $w<_k \zeta w$.
Moreover, the interval \defcolor{$[w,\zeta w]_k$} in the $k$-Bruhat order between $w$ and $\zeta w$, considered as a
poset whose covers are labeled, is independent of $w$ and $k$.
In particular, the \defcolor{rank} of $\zeta$ is  $\defcolor{\rk(\zeta)}:=\ell(\zeta w){-}\ell(w)$ for any
$w,k$ with $w<_k\zeta w$.
Two permutations $\zeta,\eta$ are \demph{disjoint} if $\zeta\eta=\eta\zeta$ and
$\rk(\zeta\eta)=\rk(\zeta)+\rk(\eta)$---this is equivalent to the supports 
of $\zeta$ and $\eta$ forming a noncrossing partition of their union
(see~\cite[\S~3.3]{BS98}).
By \demph{support} of a permutation $\zeta$, we mean the set $\{i\mid \zeta(i)\neq i\}$.

In~\cite[\S~6.2]{BS02} permutations $\zeta$ in which the interval $[w,\zeta w]_k$ admits a
peakless chain were studied. 
An $(r{+}1)$-cycle $\zeta$ is \demph{minimal} if it has the minimal possible rank $r$.
Lemma~6.7 of~\cite{BS02} states that if $\zeta$ is a minimal cycle and $w<_k \zeta w$, then there is a unique
peakless chain in $[w,\zeta w]_k$.
Moreover, $|\up(\zeta)|$ is the length of the decreasing subsequence in the labels of that chain~\eqref{Eq:peakless}, 
which is also equal to the \demph{height} of $\eta:=w^{-1}\zeta w$, which is 
$\defcolor{\het(\eta)}:=\#\{i\leq k\mid \eta(i)\neq i\}$.
For simplicity, we suppress the dependence of height on $k$, as $\eta$ itself depends upon
$w$, $k$, and $\zeta$ in this discussion.
We record these facts.

\begin{proposition}\label{P:minimal}
 Let $\zeta$ be a minimal cycle, $w$ be a permutation, and $k$ be an integer such that $w<_k\zeta w$.
 Then there is a unique peakless chain in $[w,\zeta w]_k$.
 If~$\eqref{Eq:peakless}$ is the sequence of labels in this chain and we set $\eta:=w^{-1}\zeta w$ so that 
 $\zeta w = w\eta$ then
\[
   |\up(\zeta)|\ =\ a\ =\ \het(\eta)\,.
\]
\end{proposition}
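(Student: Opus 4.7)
The plan is to deduce this proposition directly from Lemma~6.7 of \cite{BS02}, together with a short bookkeeping step converting its numerical invariant into $\het(\eta)$.

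For the existence and uniqueness of the peakless chain, and for the equality $|\up(\zeta)| = a$, I would simply invoke Lemma~6.7 of \cite{BS02}. Since $\zeta$ is a minimal $(r{+}1)$-cycle and the labeled interval $[w,\zeta w]_k$ depends only on $\zeta$ by the results of \cite{BS98,BS99} recalled just before the proposition, the hypothesis $w <_k \zeta w$ is all that is needed in order to apply their result. This gives the unique peakless chain in $[w,\zeta w]_k$ and identifies the length of its decreasing prefix with $|\up(\zeta)|$.

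The remaining task is the identity $\het(\eta) = |\up(\zeta)|$. Writing a generic element of $\up(\zeta)$ as $b = w(i)$ and using $\zeta = w\eta w^{-1}$, the defining inequality $b < \zeta(b)$ translates to $w(i) < w(\eta(i))$, so $|\up(\zeta)| = \#\{i \mid w(i) < w(\eta(i))\}$. Next, I would invoke the standard coordinatewise description of the $k$-Bruhat order: any chain of covers from $w$ to $w\eta$ consists of right multiplications by transpositions $(i_t,j_t)$ with $i_t \leq k < j_t$, each of which weakly increases the values of $w$ at positions $\leq k$ and weakly decreases them at positions $> k$. Hence $w(i) \leq w(\eta(i))$ for $i \leq k$ and $w(i) \geq w(\eta(i))$ for $i > k$, with equality in either range if and only if $\eta(i) = i$. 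It follows that $w(i) < w(\eta(i))$ precisely when $i \leq k$ and $\eta(i) \neq i$, yielding $|\up(\zeta)| = \#\{i \leq k \mid \eta(i) \neq i\} = \het(\eta)$.

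Since the substantive structural content is already contained in \cite{BS02}, there is no single hard step in this proof; the main pitfall is only to ensure that the conventions of \cite{BS02} on cycles, conjugation, and labeling of $k$-Bruhat covers align with those used here. Once that alignment is confirmed, the coordinatewise monotonicity of $<_k$ closes the argument.
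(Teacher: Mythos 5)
Your proposal matches the paper's treatment: Proposition~\ref{P:minimal} is presented there simply as a record of Lemma~6.7 of \cite{BS02}, and you invoke that lemma in exactly the same way. Your additional coordinatewise verification that $|\up(\zeta)|=\het(\eta)$ --- each $k$-Bruhat cover $v\lessdot_k v(i,j)$ with $i\le k<j$ raises the value at a position $\le k$ and lowers one at a position $>k$, so $w(i)<w(\eta(i))$ exactly when $i\le k$ and $\eta(i)\neq i$ --- is correct and supplies a detail the paper only asserts.
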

    
A permutation is minimal if it is the product of disjoint minimal cycles.
These results imply that a permutation $\zeta$ is minimal if
and only if whenever $w<_k \zeta w$, there is a peakless chain in the interval $[w,\zeta w]_k$.
We restate Theorem~\ref{Th:one}.\medskip

\noindent{\bf Theorem~\ref{Th:one}.\ }{\it
 Let $k,r$ be positive integers and $w\in S_\infty$ a permutation.
 Then
\[
    p_r(x_1,\dotsc,x_k)\cdot\frakS_w\ =\ 
   \sum (-1)^{\het(\eta)+1}\frakS_{w\eta}\,,
\]
 the sum over all $(r{+}1)$-cycles $\eta$ such that }
\[
    w\ <_k\ w\eta 
    \qquad\mbox{with}\qquad
    \ell(w\eta)=\ell(w)+r\,.
\]

\begin{example}
 Before presenting a proof of Theorem~\ref{Th:one}, we give an example of the Murnaghan-Nakayama rule for
 Schubert polynomials. 
 For this $k=r=4$, and we restrict to permutations $w\in S_8$, expressing such a $w$ as the word of its
 values $w(1)\dotsc w(8)$.
 \begin{eqnarray*}
   p_4(x_1,\dotsc,x_4)\cdot\frakS_{34165278}&=&
      \frakS_{35671248}+\frakS_{36471258}+\frakS_{45362178}+\frakS_{46173258}\\
    &&-\frakS_{34672158}-\frakS_{34681257}-\frakS_{36184257}\,.
 \end{eqnarray*}
 If $w=34165278$, then these indices are 
 \begin{eqnarray*}
  35671248 &=& w(\Blue{3},\Blue{4},7,\Blue{2},5)\,,\ 
  36471258\ =\ w(\Blue{2},\Blue{4},7,5,\Blue{3})\,,\ 
  45362178\ =\ w(\Blue{1},\Blue{2},5,6,\Blue{4})\,,\\ 
  46173258 &=& w(\Blue{1},\Blue{2},\Blue{4},7,5)\,,\ 
  34672158\ =\ w(\Blue{3},\Blue{4},7,5,6)\,,\ 
  34681257\ =\ w(\Blue{3},\Blue{4},8,7,5)\,,
 \end{eqnarray*}
  and $36184257\ =\ w(\Blue{2},\Blue{4},8,7,5)$.
  The first four have height 3, while the last three have height 2.
  This agrees with the signs in Theorem~\ref{Th:one}.
\end{example}

\begin{proof}[Proof of Theorem~$\ref{Th:one}$]
 Since $x_i=\frakS_{t_i}-\frakS_{t_{i-1}}$, Monk's rule gives the transition formula,
\[
   x_i\cdot\frakS_w\ =\ 
    \sum_{\substack{i< b\\ \ell(w(i,b))=\ell(w)+1}} \frakS_{w(i,b)}
   \ \ -\ \ 
    \sum_{\substack{a<i\\ \ell(w(a,i))=\ell(w)+1}} \frakS_{w(a,i)}\,.
\] 
 Thus if a Schubert polynomial $\frakS_u$ appears in 
 $x_i^r\cdot\frakS_w$, then $u=w\eta$, where $\eta$ is an $(r{+}1)$-cycle.
 Summing over $i=1,\dotsc,k$ we see that if $\frakS_u$ appears in 
 $p_r(x_1,\dotsc,x_k)\cdot\frakS_w$, then $u= w\eta$ with $\eta$ an $(r{+}1)$-cycle.
 As $p_r(x_1,\dotsc,x_k)$ is symmetric, we have $w<_kw\eta$.
 Writing $w\eta=\zeta w$, then $\zeta$ is an $(r{+}1)$-cycle of rank $r$,
 and is therefore a minimal cycle.

 By Proposition~\ref{P:minimal}, there is a unique peakless chain in $[w,\zeta w]_k$ and its sequence of decreasing
 labels has length $a:=\het(\eta)$.
 By Proposition~\ref{Prop:peakless}, the only product $s_{(r-i,1^i)}\cdot \frakS_w$ containing $\frakS_{w\eta}$
 is $s_{(r{-}a{+}1,1^{a{-}1})}\cdot \frakS_w$, and it occurs with multiplicity 1.
 By~\eqref{Eq:hookSum}, the coefficient of $\frakS_{w\eta}$ in $p_r\cdot\frakS_w$ is 
 $(-1)^{a-1}=(-1)^{\het(\eta)+1}$, which completes the proof. 
\end{proof}

\begin{remark}
 We apply the Murnaghan-Nakayama rule to the cohomology of flag manifolds.
 Fix a positive integer $n$.
 The flag manifold $\Fln$ is the collection of all complete flags, 
\[
   \Fdot\ =\ 0\ \subset\ F_1\ \subset\ F_2\ \subset \dotsb\ \subset\ F_n\ =\ \C^n\,,
\]
 where the subspace $F_i$ has dimension $i$.
 There is a universal family of flags over $\Fln$,
\[
   \calF_1\ \subset\ \calF_2\ \subset\ \dotsb\ \subset\ \calF_n\ =\ 
    \C^n\times\Fln\,,
\]
 where $\calF_i$ is the rank $i$ tautological subbundle given by the $i$th flag.
 For each $i=1,\dotsc,n$, let $\defcolor{y_i}:= -c_1(\calF_i/\calF_{i-1})$.
 These classes generate the cohomology of $\Fln$ with the only relations the non-constant symmetric polynomials
 in $y_1,\dotsc,y_n$.

 This has a cell decomposition.
 For a flag $\Edot\in\Fln$ and permutation $w\in S_n$, the set
\[
   \defcolor{X^\circ_w\Edot}\ :=\ \bigl\{\Fdot\in\Fln\mid  
     \dim(F_a\cap E_b)=\#\{i\leq a \mid w(i)\geq n{+}1{-}b\}\ \forall a,b\bigr\}\,,
\]
 is a topological cell of codimension $\ell(w)$.
 Writing \defcolor{$[X_w\Edot]$} for the cohomology class Poincar\'e dual to the closure of 
 $X^\circ_w\Edot$, these form a basis for the cohomology of $\Fln$,
\[
    H^*(\Fln)\ =\ \bigoplus_{w\in S_n} \Q [X_w\Edot]\,.
\]
 By~\cite{BGG73,De74,LS82} the homomorphism $\psi\colon\Q[x_1,x_2,\dotsc]\to H^*(\Fln)$ that sends 
 $x_i$ to $y_i$ when $i\leq n$ and to $0$ for $i>n$, sends the Schubert polynomial $\frakS_w$ to the Schubert cycle 
 $[X_w\Edot]$ and  $\frac{1}{r!}p_r(x_1,\dotsc,x_k)$ to the tautological class $\ch_r(\calF_k^\vee)$.
 Thus the Murnaghan-Nakayama rule for Schubert polynomials computes the intersection of Schubert cycles
 with tautological classes.

 Given $\adot\colon 0<a_1<\dotsb<a_s<n$, the partial flag manifold $\Flan$ consists of
 all flags
\[
   0\ \subset\ F_{a_1}\ \subset\ F_{a_2}\ \subset\ \dotsb\ \subset\ 
     F_{a_s}\ \subset \C^n\,,
\]
 where $\dim(F_{a_i})=a_i$.
 The cohomology of $\Flan$ is a subring of $H^*(\Fln)$.
 It has a basis of Schubert cycles $[X_w\Edot]$, where the descent set of $w$ is a 
 subset of $\{a_1,\dotsc,a_s\}$.
 Consequently, the Murnaghan-Nakayama rule for Schubert polynomials, when $k\in\{a_1,\dotsc,a_s\}$, computes the 
 intersection of Schubert cycles in $\Flan$ with tautological classes. 
\end{remark}

%
\section{Murnaghan-Nakayama rule in the quantum cohomology of a
  Grassmannian}\label{S:quantum} 

Additively, the quantum cohomology ring $qH^*(\Gr(k,n))$ of the Grassmannian is equal
to $H^*(\Gr(k,n))[q]$, where $q$ is an indeterminate of degree $n$.
That is, it has a basis $q^d\sigma_\lambda$ for $d$ a nonnegative integer and
$\lambda\leq\Box_{k,n}$.
The product, \defcolor{$*$}, encodes three-point Gromov-Witten invariants~\cite{FP}. 
This was the first space whose quantum cohomology was computed~\cite{Be97,In91,ST97,Va92,Wi95}.
Its ring structure is given by
\[ 
   \Lambda_k[q]/\langle h_{n-k+1},\dotsc,h_{n-1},h_{n}+(-1)^kq\rangle
     \ \xrightarrow{\ \simeq\ }\  qH^*(\Gr(k,n))
\]
where $e_a$ is sent to $\sigma_{1^a}$, the $a$th Chern class of the dual of the tautological bundle.
If $q=0$, we recover the usual cohomology by~\eqref{Eq:Grass_Quot}.
Bertram~\cite{Be97} showed that the image of a Schur polynomial $s_\lambda$ for $\lambda\leq\Box_{k,n}$ is the
Schubert cycle $\sigma_\lambda$, just as in ordinary cohomology.
Thus $p_r(x_1,\dotsc,x_k)$ for $r<n$ represents tautological classes as in ordinary cohomology.

Quantum cohomology is also a quotient of $\Lambda_k$.
Sending $e_a$ to $\sigma_{1^a}$ as before and 
$s_{(n{-}k{+}1,1^{k-1})}$ to the quantum parameter $q$ gives an isomorphism
 \begin{equation}\label{Eq:Qcoh_Pres}
  \Lambda_k/\langle h_{n-k+1},\dotsc,h_{n-1}\rangle \ 
    \xrightarrow{\ \sim\ }\ qH^*(\Gr(k,n))\,.
 \end{equation}
To see that this surjective map is an isomorphism we consider the identity
\[ 
   h_a -e_1h_{a-1} +e_2h_{a-2} -\cdots +(-1)^ae_a\ =\ 0
 \]
which specializes when $a=n$ to  $h_n +  (-1)^ks_{(n-k+1,1^{k-1})} = 0$
since $e_kh_{n-k} = s_{(n-k+1,1^{k-1})}$ in $\Lambda_k / ( h_{n-k+1},\dotsc , h_{n-1})$.

In their study of representations of Hecke algebras at $n$th roots of unity, 
Goodman and Wenzl~\cite{GW90} defined a (seemingly) different quotient of $\Lambda_k$,
 \begin{equation}\label{Eq:GW}
   \defcolor{\Lambda_{k,n}}\ :=\ 
    \Lambda_k/\langle s_\lambda\mid \lambda_1-\lambda_k=n{-}k{+}1\rangle\,,
 \end{equation}
and showed that the images of Schur polynomials $s_\lambda$ with
$\lambda_1{-}\lambda_k\leq n{-}k$ form a basis.
They obtained a formula for the corresponding Littlewood-Richardson coefficients 
with respect to this basis that was equal to a formula obtained by
Kac~\cite[Exer.\ 13.35]{Kac} and Walton~\cite{Wa90} for fusion coefficients in a
Wess-Zumino-Witten conformal field theory. 
Later, Bertram, Ciocan-Fontanine, and Fulton obtained the same formula for the
Littlewood-Richardson coefficients in the quantum cohomology of the
Grassmannian~\cite{BCF}, showing that $\Lambda_{k,n}$ is isomorphic to
$qH^*(\Gr(k,n))$ and that the ideals in~\eqref{Eq:Qcoh_Pres} and~\eqref{Eq:GW}
coincide.
Write $\psi$ for the map $\Lambda_k\twoheadrightarrow qH^*(\Gr(k,n))$ implicitly
defined by either~\eqref{Eq:Qcoh_Pres} or~\eqref{Eq:GW}.

Let $\lambda$ be a partition with at most $k$ parts.
The \demph{$n$-core} \defcolor{$\widehat{\lambda}$} of $\lambda$ is obtained from $\lambda$ by
removing rim hooks of size $n$ until it is not possible to remove any more.
The result is independent of choices~\cite{Ma95}.
When $k=4$ and $\lambda=(12,10,7,3)$, its $8$-core is $\widehat{\lambda}=(4,2,2,0)$,
and there are six different ways to remove $8$-hooks from $\lambda$.
\begin{center}
   \includegraphics{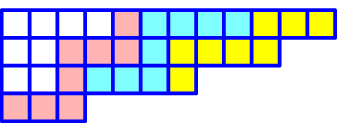}\qquad\rule{0pt}{35pt}
   \includegraphics{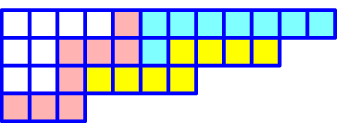}\qquad
   \includegraphics{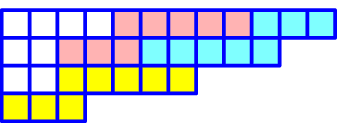}\vspace{10pt}\\
   \includegraphics{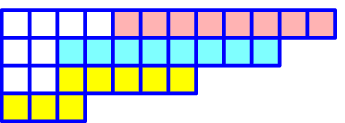}\qquad
   \includegraphics{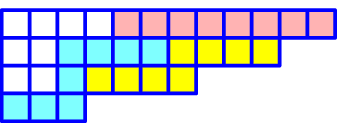}\qquad
   \includegraphics{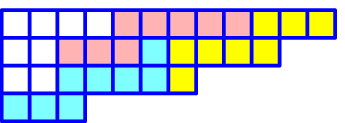}
\end{center}
Since $\Box_{k,n}$ has $n{-}1$ diagonals, it is impossible to remove a rim hook of size $n$ from any partition
$\lambda\leq\Box_{k,n}$.
Consequently, all partitions $\lambda\leq\Box_{k,n}$ are $n$-cores.

The image $\psi(s_\lambda)$ of a Schur polynomial $s_\lambda\in\Lambda_k$ in
$qH^*(\Gr(k,n))$ was given in~\cite{BCF} by  
 \begin{equation}\label{Eq:image}
   s_\lambda\ \longmapsto\ 
    \left\{\begin{array}{rcl}
       (-1)^{ks-\sum\het(\rho_i)} q^s \sigma_{\widehat{\lambda}}
        &\ &\mbox{if } \widehat{\lambda}\leq \Box_{k,n}\\
        0&&\mbox{otherwise}\end{array}\right.\ ,
 \end{equation}
where we remove $s$ rim hooks $\rho_1,\dotsc,\rho_s$ of size $n$ from $\lambda$ to obtain its $n$-core
$\widehat{\lambda}$, 
Thus $\psi(s_{(12,10,7,3)})=q^3 s_{(4,2,2)}$, as $k=4$ and the sum of the heights of the three hooks is even.
Similarly, $\psi(s_{(9,8,5,2)})=0$, as the $8$-core of $(9,8,5,2)$ is $(7,4,3,2)$,  
and $(7,4,3,2)\not\leq\Box_{4,8}$.\medskip

\noindent{\bf Theorem~\ref{Th:two}.} {\it
 Let $k,r< n$ be positive integers and $\lambda\leq \Box_{k,n}$.
 Then
 \begin{eqnarray}\label{Eq:q-MN}
    p_r * \sigma_\lambda\ =\ 
      \sum_\mu  (-1)^{\het(\mu/\lambda)+1} \sigma_\mu 
     \ -\ (-1)^k q \sum_\nu  (-1)^{\het(\lambda/\nu)+1} \sigma_\nu\,,
 \end{eqnarray}
 where the first sum is over all $\mu\leq\Box_{k,n}$ with $\mu/\lambda$ a rim hook of size $r$ and the second
 sum is over all $\nu\leq\lambda$ with $\lambda/\nu$ a rim hook of size $n{-}r$.}\medskip

Suppose that $k=4$, $n=8$, and $r=5$, then 
 \begin{equation}\label{Eq:q-MN_ex}
   p_5 * \sigma_{(3,2,1)}\ =\ \sigma_{(3,3,3,2)} + \sigma_{(4,4,3)} 
       + q\sigma_{(3)} + q \sigma_{(1,1,1)}\,.
 \end{equation}
To see this, first consider all four ways of adding a rim hook of size 5 to $(3,2,1)$,
\[
   \includegraphics{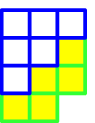}\qquad
   \includegraphics{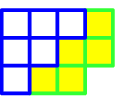}\qquad
   \includegraphics{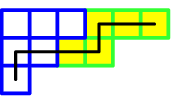}\qquad
   \includegraphics{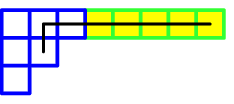}
\]
For the last two, whose first row exceeds $8{-}4$, we indicate their unique rim hooks of size 8.
Removing them gives partitions $\nu\leq\lambda$ with $\lambda/\nu$ a rim hook of size $3=8-5$.
Applying the formulas for the signs in~\eqref{Eq:q-MN} gives the expression~\eqref{Eq:q-MN_ex}.

\begin{proof}
 Let $\lambda\leq\Box_{k,n}$ and $1\leq r<n$.
 The Murnaghan-Nakayama rule in $\Lambda_k$ is
 \begin{equation}\label{Eq:Class_MN}
     p_r \cdot s_\lambda\ =\ 
      \sum_\mu  (-1)^{\het(\mu/\lambda)+1} s_\mu \,,
 \end{equation}
 the sum over all partitions $\mu$ with $k$ parts such that $\mu/\lambda$ is a rim hook of size $r$.
 We apply the map $\psi$ to~\eqref{Eq:Class_MN} to obtain a formula for $p_r * \sigma_\lambda$.
 Terms of the sum~\eqref{Eq:Class_MN} indexed by partitions $\mu\leq\Box_{k,n}$ contribute to the first, classical,
 sum in~\eqref{Eq:q-MN}, as such $\mu$ are $n$-cores.
 Suppose that $\mu\not\leq\Box_{k,n}$ indexes a term in the sum~\eqref{Eq:Class_MN}.
 Let $\nu$ be the $n$-core of $\mu$.
 If $\nu\not\leq\Box_{k,n}$, then $\psi(s_\mu)=0$.
 If $\nu\leq\Box_{k,n}$, then we will show (1) that $\mu/\nu$ is a rim hook of size $n$, 
 (2) that $\nu\leq\lambda$ with $\lambda/\nu$ is a rim hook of size $n{-}r$, and 
 (3) that 
\[
   \het(\lambda/\nu)\ +\ \het(\mu/\lambda)\ =\ \het(\mu/\nu)\ +\ 1\,.
\]
 These together imply that 
 \begin{equation}\label{Eq:quantum-signs}
    \psi\bigl( (-1)^{\het(\mu/\lambda)+1} s_\mu \bigr)
   \ =\ q(-1)^{\het(\mu/\nu)+1+k-\het(\mu/\nu)} \sigma_\nu
   \ =\ -(-1)^k q (-1)^{\het(\lambda/\nu)+1} \sigma_\nu\,.
 \end{equation}
 We complete the proof by showing (4) that all partitions $\nu\leq\lambda$ with $\lambda/\nu$ a
 rim hook of size $n{-}r$ arise in this way.

 A rim hook is determined by either its Southwestern-most or Northeastern-most box.
 If we have a rim hook of size $t$ in a partition $\kappa$ with $k$ parts whose Northeastern-most box
 is the last box in row $i$ of $\kappa$, then that rim hook consists of the $t$ consecutive boxes along the rim of
 $\kappa$ starting from the end of row $i$ and moving Southwestward.
 Necessarily, $\kappa_i{+}k{-}i\geq r$, as there are only $\kappa_i{+}k{-}i$ diagonals Southwest of this first box.

 Suppose that $\mu\not\leq\Box_{k,n}$ is obtained from $\lambda\leq\Box_{k,n}$ by adding a rim hook of size
 $r$.
 Then the Northeastern-most box of $\mu/\lambda$ is in the first row of $\mu$.
 Moreover, we either have that $\mu_2=\lambda_1{+}1$ or $\mu_2=\lambda_2$, depending on whether or not the rim hook
 has height exceeding 1.
 In either case, it is not possible to remove a rim hook of size $n$ from $\mu$ starting from any row other
 than 1. 

 Consequently, if we remove a rim hook of size $n$ from $\mu$ to obtain a partition $\nu$, we necessarily first
 remove the rim hook $\mu/\lambda$, and then a further $n{-}r$ boxes from $\lambda$.
 This implies that $\nu$ satisfies $\nu\leq\lambda$, and so it is an $n$-core, which implies
 claims (1) and (2).
 The rim hook $\lambda/\nu$ has Northeastern-most box in the same row as the Southwestern-most box of
 $\mu/\lambda$, which implies claim (3) and~\eqref{Eq:quantum-signs}.

 For the last claim, as $\Box_{k,n}$ contains $n{-}1$ diagonals, we may add a rim hook of size $n$ to any
 partition $\kappa\leq\Box_{k,n}$ starting in any row $i\leq k$, obtaining a partition
 $\kappa'\not\leq\Box_{k,n}$.
 Thus if $\nu\leq\lambda$ is any partition with $\lambda/\nu$ a rim hook of size $n{-}r$, we may extend this rim
 hook rightward to a rim hook of size $n$.
 If $\mu$ is the partition obtained, then $\mu\not\leq\Box_{k,n}$, $\mu/\lambda$ is a rim hook of size $r$, 
 $\mu/\nu$ is a rim hook of size $n$, and $\nu$ is the $n$-core of $\mu$.
\end{proof}

Theorem~2 can be extended to multiplication by the image $\psi(p_r)$ for
$r\geq n$, as in that case $\psi(p_r)=(-1)^kq\psi(p_{r-n})$, which may be seen
using~\eqref{Eq:hookSum} and~\eqref{Eq:image}.
\providecommand{\bysame}{\leavevmode\hbox to3em{\hrulefill}\thinspace}
\providecommand{\MR}{\relax\ifhmode\unskip\space\fi MR }
\providecommand{\MRhref}[2]{%
  \href{http://www.ams.org/mathscinet-getitem?mr=#1}{#2}
}
\providecommand{\href}[2]{#2}

\end{document}